\newtheorem{theorem}{Theorem}[section]
\newtheorem{corollary}[theorem]{Corollary}
\newtheorem{lemma}[theorem]{Lemma}
\newtheorem{proposition}[theorem]{Proposition}
\theoremstyle{remark}
\newtheorem{remark}[theorem]{Remark}
\theoremstyle{definition}
\newtheorem{define}[theorem]{Definition}
\newtheorem*{Acknowledgement}{Acknowledgement}
\numberwithin{equation}{section}
\newcommand{\CC}{\mathbb C}
\begin{document}

\title[Boundary behaviour of the squeezing function and Fridman invariant]{A note on the boundary behaviour of the squeezing function and Fridman invariant}
\author{Van Thu Ninh\textit{$^{1,2}$}, Anh Duc Mai\textit{$^{3}$}, Thi Lan Huong Nguyen\textit{$^{4}$} and Hyeseon Kim\textit{$^{5}$}} 
\address{Ninh Van Thu}
\address{\textit{$^{1}$}~Department of Mathematics, Vietnam National University at Hanoi, 334 Nguyen Trai, Thanh Xuan, Hanoi, Vietnam}
\address{\textit{$^{2}$}~Thang Long Institute of Mathematics and Applied Sciences,
Nghiem Xuan Yem, Hoang Mai, HaNoi, Vietnam}
\email{thunv@vnu.edu.vn}

\address{Mai Anh Duc}
\address{\textit{$^{3}$}~Faculty of Mathematics Physics and Informatics, Tay Bac University, Quyet Tam, Son La City, Son La, Vietnam}
\email{ducphuongma@gmail.com, maianhduc@utb.edu.vn}

\address{Nguyen Thi Lan Huong}
\address{\textit{$^{4}$}~Department of Mathematics, Hanoi University of Mining and Geology, 18 Pho Vien, Bac Tu Liem, Hanoi, Vietnam}
\email{lanhuongmdc@gmail.com}

\address{Hyeseon Kim}
\address{\textit{$^{5}$}~Research Institute of Mathematics, Seoul National University, 1 Gwanak-ro, Gwanak-gu, Seoul 08826, Republic of Korea}
\email{hop222@snu.ac.kr, hop222@gmail.com}

\subjclass[2010]{Primary 32H02; Secondary 32M05, 32T25.}
\keywords{holomorphic mappings, finite type domains, Fridman invariant, squeezing function}  
 
\begin{abstract}
Let $\Omega$ be a domain in $\mathbb C^n$. Suppose that $\partial\Omega$ is smooth pseudoconvex of D'Angelo finite type near a boundary point $\xi_0\in \partial\Omega$ and the Levi form has corank at most $1$ at $\xi_0$. Our goal is to show that if the squeezing function $s_\Omega(\eta_j)$ tends to $1$ or the Fridman invariant $h_\Omega(\eta_j)$ tends to $0$ for some sequence $\{\eta_j\}\subset \Omega$ converging to $\xi_0$, then this point must be strongly pseudoconvex.  
\end{abstract}

\maketitle  

\section{Introduction and the main result}  
The study of biholomorphic invariants has been attracted much attention in the complex differential geometry to enhance the comprehension and application of biholomorphic classification of complex domains. The squeezing function, the Fridman invariant, and the quotient invariant by using the Carath\'{e}odory and Kobayashi-Eisenman volume elements, have received increasing interest as biholomorphic invariants in recent years (see \cite{BK19}, \cite{MV19}, \cite{NV18}, \cite{NN19} and the references therein). We particularly consider both the squeezing function and the Fridman invariant associated to a certain class of pseudoconvex domains in $\mathbb C^n$ in this paper.

Let $\Omega$ be a domain in $\mathbb C^n$ and $p\in\Omega$. For a holomorphic embedding $f\colon\Omega\to\mathbb B^n$ with $f(p)=0$, let us define
\[
s_{\Omega, f}(p):=\sup\left\{r>0\colon \mathbb B(0;r)\subset f(\Omega)\right\},
\]
where $\mathbb B(z_{0}; r)\subset\mathbb C^n$ denotes the complex ball of radius $r$ with center at $z_0$ and $\mathbb B^n$ denotes the complex unit ball $\mathbb B(0;1)$. Then the \textit{squeezing function} $s_{\Omega}: \Omega\to\mathbb R$ is defined in \cite{DGZ12} as
\[
s_{\Omega}(p):=\sup_{f} \left\{s_{\Omega, f}(p)\right\}.
\]
Note that $0 < s_{\Omega}(z)\leq 1$ for any point $z \in \Omega$. 

Next, let us recall the Fridman invariant. Let $\Omega$ be a bounded domain in $\mathbb C^n$ and let $B_\Omega(p,r)$ be the Kobayashi ball around $p$ of radius $r>0$. Let $\mathcal{R}$ be the set of all $r>0$ such that there is a holomorphic embedding $f\colon \mathbb B^n\to \Omega$ with $B_\Omega(p,r)\subset f(\mathbb B^n)$. Note that $\mathcal{R}$ is non-empty (cf. \cite{MV19}). Then the \emph{Fridman invariant} is defined by
\[
h_\Omega(p)=\inf_{r\in \mathcal R}\frac{1}{r}.
\]  

Let $\Omega$ be a bounded domain in $\mathbb C^n$ with smooth boundary $\partial\Omega$ and $\xi_0\in \partial \Omega$. Suppose that $\partial \Omega$ is pseudoconvex of D'Angelo finite type near $\xi_0$. Then it is proved in \cite{DGZ16}, \cite{DFW14} and \cite{KZ16} that $\xi_0$ is strongly pseudoconvex if $\lim\limits_{\Omega \ni z\to \xi_0}s_{\Omega}(z)=1$. 

Now we consider a sequence $\{\eta_j\}\subset \Omega$ converging to $\xi_0$. Suppose that $\Omega$ is pseudoconvex of D'Angelo finite type near $\xi_0$ and $\lim\limits_{j\to \infty} s_\Omega(\eta_j)=1$ or $\lim\limits_{j\to \infty} h_\Omega(\eta_j)=0$. In \cite{JK18} and \cite{MV19}, they proved that if the sequence $\{\eta_j\}\subset \Omega$ converges to $\xi_0$ along the inner normal line to $\partial\Omega$ at $\xi_0$, then $\xi_0$ must be strongly pseudoconvex (for details, see \cite{JK18} for $n=2$ and \cite{MV19} for general case). Moreover, this result was obtained in \cite{Ni18} for the case that $\{\eta_j\}\subset \Omega$ converges nontangentially to $\xi_0$ and in \cite{NN19} for the case that  $\{\eta_j\}\subset \Omega$ converges $\left(\frac{1}{m_1},\ldots, \frac{1}{m_{n-1}}\right)$-nontangentially to an $h$-extendible boundary point $\xi_0$ (for definition, see \cite{NN19}). Here $(1, m_1, \ldots, m_{n-1})$ is the \emph{multitype of $\partial\Omega$ at $\xi_0$} and the \emph{$h$-extendiblility at $\xi_0$} means that the Catlin multitype and D'Angelo multitype of $\partial\Omega$ at $\xi_0$ coincide (see \cite{Yu94}). 

Throughout this paper, we consider a smooth bounded domain $\Omega$ in $\mathbb C^n$ and a point $\xi_0\in \partial\Omega$ such that $\Omega$ is pseudoconvex of D'Angelo finite type near $\xi_0$ and the Levi form has corank at most $1$ at $\xi_{0}$. In this paper, we prove the following theorem.

\begin{theorem}\label{main thm}
Let $\Omega$ be a bounded domain in $\mathbb C^n$ with smooth pseudoconvex boundary. If $\xi_0$ is a boundary point of $\Omega$ of D'Angelo finite type such that the Levi form has corank at most $1$ at $\xi_{0}$ and  if there exists a sequence $\{\eta_j\}\subset\Omega$ such that $\lim\limits_{j\to\infty}\eta_j=\xi_0$ and $\lim\limits_{j\to\infty}s_{\Omega}(\eta_j)=1$ or $\lim\limits_{j\to\infty}h_{\Omega}(\eta_j)=0$, then $\partial \Omega$ is strongly pseudoconvex at $\xi_0$.
\end{theorem}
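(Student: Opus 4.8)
The plan is to argue by contradiction using the scaling (Pinchuk-type) method, reducing the problem to a rigidity statement for an associated weighted-homogeneous polynomial model. So suppose $\xi_0$ is not strongly pseudoconvex. Since the Levi form has corank at most $1$ at $\xi_0$ and $\xi_0$ is of D'Angelo finite type, the boundary point is $h$-extendible; its multitype has the form $(1,2,\dots,2,2m)$ with a single degenerate direction, and the failure of strong pseudoconvexity forces $2m\geq 4$. After a local holomorphic change of coordinates I may assume $\xi_0=0$ and that near $0$ the domain is defined by $\mathrm{Re}\, z_n + P(z_1,\bar z_1) + \sum_{k=2}^{n-1}|z_k|^2 + (\text{higher order}) < 0$, where $P$ is a subharmonic, non-harmonic real polynomial whose weighted-homogeneous leading part has degree $2m$.

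Next I would set up the anisotropic dilations. The essential difficulty, and the reason earlier works restricted the approach direction, is that $\{\eta_j\}$ is allowed to tend to $\xi_0$ in an arbitrary manner. To handle this, for each $j$ I choose a boundary point $\xi_j\in\partial\Omega$ nearest to $\eta_j$, compose with the local normalization at $\xi_j$, and apply the anisotropic dilation $\Delta_j$ adapted to the multitype that sends $\eta_j$ to a fixed interior base point $q_0:=(0,\dots,0,-1)$. Writing $\Omega_j:=\Delta_j(\Omega)$, the key geometric step is to show that $\Omega_j$ converges in the local Hausdorff sense to the model $\Omega_\infty=\{z\in\mathbb{C}^n:\mathrm{Re}\,z_n + P(z_1,\bar z_1)+\sum_{k=2}^{n-1}|z_k|^2<0\}$, and that $\Omega_\infty$ is Kobayashi complete and taut. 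The re-centering at the nearest boundary point is exactly what makes $q_0$ land at a fixed interior point of $\Omega_\infty$ irrespective of the (possibly tangential) approach of $\eta_j$; this is where the $h$-extendibility afforded by the corank-$1$ hypothesis is used, since it guarantees that the dilations converge to the clean weighted-homogeneous model rather than to a degenerate limit.

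I would then transfer the hypothesis to the model by biholomorphic invariance and stability. Since $s_\Omega(\eta_j)=s_{\Omega_j}(q_0)$ (resp. $h_\Omega(\eta_j)=h_{\Omega_j}(q_0)$), the stability of the squeezing function and of the Fridman invariant under normal convergence of domains yields $s_{\Omega_\infty}(q_0)=1$ (resp. $h_{\Omega_\infty}(q_0)=0$). In the squeezing case I extract from the near-extremal embeddings $f_j\colon\Omega_\infty\to\mathbb{B}^n$ a locally uniform limit; because the targets lie in $\mathbb{B}^n$ the family is normal, and tautness together with Hurwitz's theorem shows the limit is a biholomorphism $\Omega_\infty\cong\mathbb{B}^n$. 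The Fridman case is treated by the analogous characterization, exhausting $\Omega_\infty$ by the embedded balls as the radii tend to infinity, again yielding $\Omega_\infty\cong\mathbb{B}^n$.

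Finally I would derive a contradiction from $\Omega_\infty\cong\mathbb{B}^n$. The model $\Omega_\infty$ is invariant under the one-parameter group of dilations $\delta_t$, $t>0$, and the orbit $\delta_t(q_0)$ tends, as $t\to 0$, to the boundary point $0\in\partial\Omega_\infty$, which is of D'Angelo type $2m\geq 4$. Transporting this orbit through the biholomorphism produces an automorphism orbit of $\mathbb{B}^n$ accumulating at a boundary point, where $\partial\mathbb{B}^n$ is strongly pseudoconvex, i.e. of type $2$. Comparing the boundary blow-up rate of the biholomorphically invariant Kobayashi metric along the image of the degenerate direction at these two accumulation points forces $2m=2$, contradicting $2m\geq 4$. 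I expect this last rigidity step---showing that the non-harmonic weighted-homogeneous model is never biholomorphic to the ball---to be the main obstacle, the other genuinely delicate point being the verification that the nearest-point scaling converges to the clean model for an arbitrary approach sequence.
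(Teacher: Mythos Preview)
Your outline follows the same scaling-plus-rigidity architecture as the paper, but two intermediate steps are organized differently and one of them hides a real gap.

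First, the transfer of the hypothesis to the model. You propose to scale the domain, invoke ``stability of the squeezing function and of the Fridman invariant under normal convergence of domains'' to get $s_{\Omega_\infty}(q_0)=1$ (resp.\ $h_{\Omega_\infty}(q_0)=0$), and only then produce a biholomorphism to $\mathbb{B}^n$. The paper bypasses this entirely: it takes the near-extremal embeddings $f_j\colon\Omega\to\mathbb{B}^n$ (resp.\ $g_j\colon\mathbb{B}^n\to U_0\cap\Omega$) supplied by the hypothesis, composes them with the scaling maps $T_j$, and applies a normality theorem for such compositions (Proposition~\ref{Normality of the scaling seq.}, from \cite{DN09}) together with \cite[Proposition~2.1]{DN09} to pass \emph{directly} to a biholomorphism $\mathbb{B}^n\to M_P$. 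Your stability route would in the end need exactly this normality input to control the inverse maps into the unbounded limit model, so the paper's arrangement is both shorter and avoids an extra lemma.

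Second, and more seriously, your identification of the limit model is not correct as stated. When you recenter at varying boundary points $\xi_j$ and dilate, the rescaled defining functions converge to $\mathrm{Re}\,w_n+\widetilde P(w_1,\bar w_1)+\sum_{\alpha=2}^{n-1}|w_\alpha|^2$, where $\widetilde P$ is the limit of the renormalized polynomials $P_{\xi_j}$ and a priori is only some element of $\mathcal{P}_{2m}$ (degree $\leq 2m$, no harmonic terms); it need \emph{not} be the polynomial $P$ from the expansion at $\xi_0$, nor need it obviously have top degree $2m$. Your contradiction at the end relies on the model having a boundary point of type $2m\geq 4$, so this is not a cosmetic issue. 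The paper handles this by invoking the Berteloot-type argument (Remark~\ref{final remark in section 3}, citing \cite[\S3--4]{Ber94} and \cite[\S4]{DN09}) to show that, once one knows $M_{\widetilde P}\cong\mathbb{B}^n$, the limit polynomial $\widetilde P$ is in fact homogeneous of degree exactly $2m$; the rigidity is then finished by Berteloot's Lemma (Lemma~\ref{equivalence of models}), which forces $\widetilde P(z_1,\bar z_1)=c|z_1|^2$ and hence $m=1$. Your Kobayashi-metric blow-up comparison is a legitimate alternative for this last step, but you still need to secure $\deg\widetilde P=2m$ first, and that is precisely the ``delicate point'' you flag without a mechanism to resolve.
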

As a consequence, we obtain the following well-known result (see \cite{JK18, MV19,BK19}). 
\begin{corollary}
Let $\Omega$ be a bounded domain in $\mathbb C^n$ with smooth pseudoconvex boundary. If $\xi_0$ is a boundary point of $\Omega$ of D'Angelo finite type such that the Levi form has corank at most $1$ at $\xi_0$ and if $\lim\limits_{\Omega\ni z\to \xi_0}s_{\Omega}(z)=1$ or $\lim\limits_{\Omega\ni z\to \xi_0} h_{\Omega}(z)=0$, then $\partial \Omega$ is strongly pseudoconvex at $\xi_0$.
\end{corollary}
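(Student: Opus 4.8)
The plan is to derive this Corollary as an immediate specialization of Theorem~\ref{main thm}. The decisive observation is that the hypothesis of an unrestricted limit, $\lim_{\Omega\ni z\to\xi_0}s_\Omega(z)=1$ (respectively $\lim_{\Omega\ni z\to\xi_0}h_\Omega(z)=0$), is logically stronger than the hypothesis of Theorem~\ref{main thm}, which only asks for the existence of a \emph{single} sequence $\{\eta_j\}\subset\Omega$ converging to $\xi_0$ along which the squeezing function tends to $1$ (respectively the Fridman invariant tends to $0$). Thus the task reduces to exhibiting one such sequence and verifying that the remaining hypotheses match.

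First I would record that a suitable sequence exists. Since $\xi_0\in\partial\Omega\subset\overline{\Omega}$, the point $\xi_0$ is a limit point of $\Omega$, so one may select any sequence $\{\eta_j\}\subset\Omega$ with $\eta_j\to\xi_0$; concretely, one can take the points obtained by moving inward from $\xi_0$ along the inner normal to $\partial\Omega$, which lie in $\Omega$ for all sufficiently small displacements because $\partial\Omega$ is smooth. Next I would restrict the assumed unrestricted limit to this sequence: if $\lim_{\Omega\ni z\to\xi_0}s_\Omega(z)=1$, then in particular $\lim_{j\to\infty}s_\Omega(\eta_j)=1$, and likewise if $\lim_{\Omega\ni z\to\xi_0}h_\Omega(z)=0$, then $\lim_{j\to\infty}h_\Omega(\eta_j)=0$. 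The remaining hypotheses of the Corollary—that $\Omega$ is a bounded domain with smooth pseudoconvex boundary, that $\xi_0$ is a boundary point of D'Angelo finite type, and that the Levi form has corank at most $1$ at $\xi_0$—coincide verbatim with those of Theorem~\ref{main thm}.

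Finally I would invoke Theorem~\ref{main thm} with this sequence $\{\eta_j\}$ to conclude that $\partial\Omega$ is strongly pseudoconvex at $\xi_0$, which is exactly the assertion of the Corollary. There is no genuine obstacle in this argument: the Corollary is a direct consequence of the Theorem, with all the analytic substance already contained in the latter. The only point meriting explicit mention is the elementary existence of a sequence in $\Omega$ converging to $\xi_0$, which is automatic because $\xi_0$ lies in the closure of $\Omega$.
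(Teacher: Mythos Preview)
Your proposal is correct and matches the paper's intent: the Corollary is stated there with the phrase ``As a consequence'' and no separate proof, since it follows immediately from Theorem~\ref{main thm} by choosing any sequence $\{\eta_j\}\subset\Omega$ converging to $\xi_0$ and restricting the unrestricted limit to it. Your write-up spells out exactly this trivial reduction.
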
 

\begin{remark}
It is known that the boundary point $\xi_0$ in our situation is $h$-extendible. Therefore, if $\{\eta_j\}$ converges $\left(\frac{1}{m_1},\ldots, \frac{1}{m_{n-1}}\right)$-nontangentially to $\xi_0$, then $\xi_0$ is strongly pseudoconvex as mentioned above. However, we emphasize here that $\{\eta_j\}\subset\Omega$ is an arbitrary sequence converging to $\xi_0$. For the proof of Theorem \ref{main thm}, as in \cite{JK18} we also utilize the scaling method by Pinchuk to show that the complex unit ball $\mathbb B^n$ is biholomorphically equivalent to a model 
\[
M_P=\left\{(z_1,\ldots,z_n)\in \mathbb{C}^{n}\colon \mathrm{Re}(z_n) +P(z_1,\bar{z}_1)+\sum_{\alpha=2}^{n-1}|z_{\alpha}|^2<0\right\},
\]
where $P$ is a non-zero real-valued subharmonic polynomial of degree $2m$, where $2m$ is the D'Angelo type of $\partial\Omega$ at $\xi_0$. Then, this yields $2m=2$ and hence our theorem follows.
\end{remark}

The organization of the paper is described as follows: For the convenience of the reader, we exploit a constructive procedure of the scaling sequence in higher dimension in Section \ref{section 2}, based on the results in \cite{Cho94} and \cite{DN09}. Then we investigate the normality of our scaling sequence which is crucial in determining the fact that $\mathbb B^n$ and $M_P$ are biholomorphically equivalent. We finalize the proof of Theorem \ref{main thm} in Section \ref{proof of the main thm}, after applying a technical lemma \cite[Lemma~$3.2$]{Ber94} related to the biholomorphic equivalence among models.

\section{The scaling sequence in higher dimension}\label{section 2}
This section is devoted to a proof of the normality of our scaling sequence. Then, by using this normality result the biholomorphic equivalence between $M_P$ and the complex unit ball $\mathbb B^n$ will be shown. 

First of all, we recall the following definition which will be used for the proof in this section (see \cite{GK} or \cite{DN09}).
\begin{define} Let $\{\Omega_j\}_{j=1}^\infty$ be a sequence of open sets in $\mathbb C^n$ and $\Omega_0 $ be an open set of $\mathbb C^n$. The sequence $\{\Omega_j\}_{j=1}^\infty$ is said to converge to $\Omega_0 $ (written $\lim\Omega_j=\Omega_0$) if and only if 
\begin{enumerate}
\item[(i)] For any compact set $K\subset \Omega_0,$ there is an $j_0=j_0(K)$ such that $j\geq j_0$ implies that $K\subset \Omega_i$, and 
\item[(ii)] If $K$ is a compact set which is contained in $\Omega_i$ for all sufficiently large $j,$ then  $K\subset \Omega_0$.
\end{enumerate}  
\end{define}

Throughout this section, the domain $\Omega$ and the boundary point $\xi_0\in \partial \Omega $ are assumed to satisfy the hypothesis of Theorem \ref{main thm}. Let $2m$ be the D'Angelo type of $\partial \Omega$ at $\xi_0$. Without loss of generality, we may assume that $\xi_0=0\in\mathbb C^n$ and the rank of Levi form at $\xi_0$ is exactly $n-2$. Let $\rho$ be a smooth defining function for  $\Omega $. After a linear change of coordinates, we can find the coordinate functions $z_1,\ldots, z_n$ defined on a neighborhood $U_0$ of $\xi_0$ such that 
\begin{equation*}
\begin{split}
\rho(z)&=\mathrm{Re}(z_n)+ \sum_{\substack{j+k\leq 2m\\
 j,k>0}} a_{j,k}z_1^j \bar z_1^k\\
&+\sum_{\alpha=2}^{n-1}|z_\alpha|^2+ \sum_{\alpha=2}^{n-1} \sum_{\substack{j+k\leq m\\
 j,k>0}}\mathrm{Re} (( b^\alpha_{j,k}z_1^j \bar z_1^k)z_\alpha)\\
&+O(|z_n| |z|+|z^*|^2|z|+|z^*|^2|z_1|^{m+1}+|z_1|^{2m+1}),
\end{split}
\end{equation*}
where $z=(z_1,\ldots,z_n)$, $z^*=(0,z_2,\ldots,z_{n-1},0)$, and $a_{j,k}, b^\alpha_{j,k}~(2\leq \alpha \leq n-1)$ are $\mathcal{C}^\infty$-smooth functions in a small neighborhood of the origin in $\mathbb C^n$. 

By \cite[Proposition~$2.2$]{Cho94} (see also~\cite[Proposition~$3.1$]{DN09}),  for each point $\eta$ in a small neighborhood of the origin, there exists a unique biholomorphism $\Phi_\eta$ of $\mathbb C^n$, $z=\Phi^{-1}_{\eta}(w)$, such that
\begin{equation}\label{Eq19} 
\begin{split}
\rho(\Phi_{\eta}^{-1}(w))-\rho(\eta)&= \mathrm{Re}(w_n)+ \sum_{\substack{j+k\leq 2m\\
 j,k>0}} a_{j,k}(\eta)w_1^j \bar w_1^k\\
&+\sum_{\alpha=2}^{n-1}|w_\alpha|^2+ \sum_{\alpha=2}^{n-1} \sum_{\substack{j+k\leq m\\
 j,k>0}}\mathrm{Re} [(b^\alpha_{j,k}(\eta)w_1^j \bar w_1^k)w_\alpha]\\
&+O(|w_n| |w|+|w^*|^2|w|+|w^*|^2|w_1|^{m+1}+|w_1|^{2m+1}),
\end{split}
\end{equation}
where $w^*=(0,w_2,\ldots,w_{n-1},0)$.

Now let us denote by 
\begin{equation}\label{Eq5}
\begin{split} 
A_l(\eta)&=\max \{|a_{j,k}(\eta)|: \ j+k=l\}\ (  2\leq l \leq 2m),\\
B_{l'}(\eta)&=\max \{|b^\alpha_{j,k}(\eta)|: \ j+k=l',\ 2\leq \alpha\leq n-1\}\ (  2\leq l' \leq m).
\end{split}
\end{equation}
For each $\delta>0$, we define $\tau(\eta,\delta)$ as follows. 
\[
\tau(\eta,\delta)=\min \left\{\big( \delta/A_l(\eta) \big)^{1/l},\ \big( \delta^{\frac{1}{2}}/B_{l'}(\eta) \big)^{1/{l'}}: \ 2\leq l \leq 2m,\ 2\leq l' \leq m \right\}.
\]
We note that the D'Angelo type of $\partial \Omega$ at $\xi_0$ equals $2m$ and the Levi form has rank at least $n-2$ at $\xi_0$. Therefore, $A_{2m}(\xi_0)\ne 0$ and  hence there exists a sufficiently small neighborhood $U$ of  $\xi_0$ such that $|A_{2m}(\eta)|\geq c>0$ for all $\eta\in U$. This yields the relation
\begin{equation}\label{Eq7} 
\delta^{1/2}\lesssim \tau(\eta,\delta)  \lesssim \delta^{1/(2m)}\  (\eta\in U).
\end{equation}Let us define an anisotropic dilation $\Delta_\eta^\epsilon$ by 
\[
\Delta_\eta^\epsilon (w_1,\ldots,w_n)=\left(\frac{w_1}{\tau_1(\eta,\epsilon)},\ldots,\frac{w_n}{\tau_n(\eta,\epsilon)}\right),
\]
where $\tau_1(\eta,\epsilon)=\tau(\eta,\epsilon),\  \tau_k(\eta,\epsilon)=\sqrt{\epsilon}\ (2\leq k\leq n-1), \ \tau_n(\eta,\epsilon)=\epsilon$. For each $\eta\in \partial \Omega$, if we set $\rho_\eta^\epsilon(w)=\epsilon^{-1}\rho\circ \Phi_\eta^{-1}\circ(\Delta_\eta^\epsilon)^{-1}(w)$, then \eqref{Eq19} and \eqref{Eq7} imply that
\begin{equation}\label{Eq20} 
\begin{split}
\rho_\eta^\epsilon(w)&= \mathrm{Re}(w_n)+ \sum_{\substack{j+k\leq 2m\\
 j,k>0}} a_{j,k}(\eta) \epsilon^{-1} \tau(\eta,\epsilon)^{j+k}w_1^j \bar w_1^k+\sum_{\alpha=2}^{n-1}|w_\alpha|^2\\
&+ \sum_{\alpha=2}^{n-1} \sum_{\substack{j+k\leq m\\
 j,k>0}}\mathrm{Re} ( b^\alpha_{j,k}(\eta)\epsilon^{-1/2} \tau(\eta,\epsilon)^{j+k}w_1^j \bar w_1^kw_\alpha)+O(\tau(\eta,\epsilon)).
\end{split}
\end{equation}

In what follows, let us fix a sufficiently small neighborhood $U_0$ of $\xi_0$ and let $\{\eta_j\}\subset \Omega$ be a sequence converging to $\xi_0$. Further, we may also assume that $\eta_j\in U_0^-:=U_0\cap\{\rho<0\}$ for all $j$. For this sequence $\{\eta_j\}$, one associates with a sequence of points $\eta_j'=(\eta_{1j}, \ldots,\eta_{(n-1)j}, \eta_{nj}+\epsilon_j)$, $ \epsilon_j>0$, $\eta_j'$ in the hypersurface $\{\rho=0\}$. Let us consider the sequence of dilations $\Delta_{\eta_j'}^{\epsilon_j}$. Then $\Delta_{\eta_j'}^{\epsilon_j}\circ \Phi_{\eta_j'}({\eta}_j)=(0,\ldots,0,-1)$ and moreover it follows from \eqref{Eq20} that $\Delta_{\eta_j'}^{\epsilon_j}\circ \Phi_{\eta_j'}(\{\rho=0\}) $ is defined by
\[
\mathrm{Re}(w_n)+ P_{\eta_j'}(w_1,\bar w_1)+\sum_{\alpha=2}^{n-1}|w_\alpha|^2+ \sum_{\alpha=2}^{n-1}\mathrm{Re}(Q^\alpha_{\eta_j'}(w_1,\bar w_1)w_\alpha)+O(\tau(\eta_j',\epsilon_j))=0,
\]
where
\begin{equation*}
\begin{split}
&P_{\eta_j'}(w_1,\bar w_1):=\sum_{\substack{j+k\leq 2m\\
 j,k>0}} a_{j,k}(\eta_j') \epsilon_j^{-1} \tau(\eta_j',\epsilon_j)^{j+k}w_1^j \bar w_1^k,\\
&Q^\alpha_{\eta_j'}(w_1,\bar w_1):= \sum_{\substack{j+k\leq m\\
 j,k>0}} b^\alpha_{j,k}(\eta_j')\epsilon_j^{-1/2} \tau(\eta_j',\epsilon_j)^{j+k}w_1^j \bar w_1^k.
\end{split}
\end{equation*}

Then one can deduce from \eqref{Eq5} that the coefficients of $P_{\eta_j'}$ and $Q^\alpha_{\eta_j'}$ are bounded by one. Therefore, after taking a subsequence, we may assume  that $\{P_{\eta_j'}\}$ converges uniformly on every compact subset of $\mathbb C$ to a polynomial $P(z_1,\bar z_1)$.  Moreover, $\{Q^\alpha_{\eta_j'}\}~(2\leq \alpha\leq  n-1)$ converge uniformly on every compact subset of $\mathbb C$ to $0$ by the following lemma.
\begin{lemma}[see Lemma~$2.4$ in \cite{Cho94}]\label{estimate of Q} $|Q^\alpha_{\eta_j'}(w_1,\bar w_1)|\leq \tau(\eta_j',\epsilon_j)^{\frac{1}{10}} $ for all $\alpha=2,\ldots, n-1$ and $|w_1|\leq 1$, provided that $\tau$ is sufficiently small.
\end{lemma}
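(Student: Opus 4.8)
The plan is to estimate $Q^\alpha_{\eta_j'}$ monomial by monomial and then add up the finitely many terms. Write $\tau=\tau(\eta_j',\epsilon_j)$ and $\epsilon=\epsilon_j$. The coefficient of $w_1^j\bar w_1^k$ in $Q^\alpha_{\eta_j'}$ is $b^\alpha_{j,k}(\eta_j')\,\epsilon^{-1/2}\tau^{j+k}$, and since $B_{j+k}(\eta_j')\geq|b^\alpha_{j,k}(\eta_j')|$ while the very definition of $\tau$ gives $B_{l'}(\eta_j')\tau^{l'}\leq\epsilon^{1/2}$ for each admissible $l'$, every such coefficient is bounded in modulus by $1$. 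Thus on $\{|w_1|\leq1\}$ one has the trivial bound $|Q^\alpha_{\eta_j'}|\leq C$ with $C=C(m,n)$, and the entire point of the lemma is to improve this $O(1)$ bound to one that decays like a positive power of $\tau$.

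The source of the decay is pseudoconvexity. I would first record the Levi-form inequality obtained by restricting the Levi form of $\rho$ to the $2$-plane spanned by the degenerate direction $\partial/\partial z_1$ and a transverse direction $\partial/\partial z_\alpha$: positive semidefiniteness of the Levi form forces a Cauchy--Schwarz type estimate between the mixed coefficients and the pure diagonal ones, of the shape $|b^\alpha_{j,k}(\eta)|^2\lesssim a_{j+k,\,j+k}(\eta)\leq A_{2(j+k)}(\eta)$ for $\eta$ near $\xi_0$ (equivalently, one completes the square in $z_\alpha$ and uses that the restriction of the defining function to a complex line stays subharmonic). Feeding this into the scaled coefficient and using that $a_{p,p}(\eta_j')\epsilon^{-1}\tau^{2p}$, the matching coefficient of the pure polynomial $P_{\eta_j'}$, is bounded by $1$, one finds with $p=j+k$ that
\[
\big|\,b^\alpha_{j,k}(\eta_j')\,\epsilon^{-1/2}\tau^{p}\,\big|\ \lesssim\ \big(a_{p,p}(\eta_j')\,\epsilon^{-1}\tau^{2p}\big)^{1/2}\ \lesssim\ 1 .
\]

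To promote this boundedness to a genuine power of $\tau$, I would analyse more carefully which of the competing quantities $(\epsilon/A_l)^{1/l}$ and $(\epsilon^{1/2}/B_{l'})^{1/l'}$ realizes the minimum defining $\tau$. The finite-type normalization $A_{2m}(\eta)\geq c>0$ near $\xi_0$ --- the same fact underlying the two-sided estimate \eqref{Eq7} --- ties $\tau$ to the top-degree pure term, and I expect this to leave the lower-degree constraints ($2(j+k)<2m$) and the mixed constraints with a definite amount of slack; quantifying that slack should upgrade $(a_{p,p}\epsilon^{-1}\tau^{2p})^{1/2}\lesssim1$ to $\lesssim\tau^{c}$ for some fixed $c>0$. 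Because only finitely many triples $(j,k,\alpha)$ with $j,k>0$, $j+k\leq m$ and $2\leq\alpha\leq n-1$ occur, a single exponent serves for all of them, and $1/10$ is a safe, far-from-optimal choice; summing the monomial bounds over $\{|w_1|\leq1\}$ then gives $|Q^\alpha_{\eta_j'}|\leq\tau^{1/10}$ for $\tau$ small.

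The hard part is exactly this last step. The naive Cauchy--Schwarz inequality on its own only yields the $O(1)$ estimate, so the real work is to make the interaction between the Levi-form positivity and the definition of $\tau$ quantitative and, above all, uniform as $\eta_j'\to\xi_0$ and $\epsilon_j\to0$; the degenerate direction, where $a_{1,1}(\xi_0)=0$, is where this bookkeeping is most delicate. This is the technical core of the construction, and it is carried out in \cite{Cho94}; the exponent $\tfrac{1}{10}$ in the statement is simply a convenient uniform record of the positive power of $\tau$ that it produces.
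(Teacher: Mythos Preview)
The paper does not supply its own proof of this lemma: it is stated with the attribution ``see Lemma~2.4 in \cite{Cho94}'' and then used immediately, with no argument given. So there is nothing in the paper to compare against. Your sketch correctly isolates the mechanism behind Cho's proof---positive semidefiniteness of the Levi form in the $(z_1,z_\alpha)$-plane yields a Cauchy--Schwarz domination of the mixed coefficients $b^\alpha_{j,k}$ by the diagonal ones, and the extremal definition of $\tau$ then converts this into a positive power of $\tau$---and, like the paper, you ultimately defer the delicate uniform bookkeeping to \cite{Cho94}. In that sense your proposal and the paper are doing the same thing: citing Cho rather than reproving the estimate.
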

Then, by Lemma~\ref{estimate of Q}, after taking a subsequence, one can deduce that $\Delta_{\eta_j'}^{\epsilon_j}\circ \Phi_{\eta_j'}(U_0^-)$ converges to the following model
\begin{equation}\label{Eq29} 
M_P:=\left\{\hat\rho:=\mathrm{Re}(w_n)+ P(w_1,\bar w_1)+\sum_{\alpha=2}^{n-1}|w_\alpha|^2<0\right\},
\end{equation}
where $P(w_1,\bar w_1)$ is a polynomial of degree $\leq 2m$ without harmonic terms (cf.~\cite[p.~$153$]{DN09}).
\begin{remark}\label{remark1}
It is well-known that $M_P$ is a smooth limit of the pseudoconvex domains $ \Delta_{\eta_j'}^{\epsilon_j}\circ \Phi_{\eta_j'}(U_0^-) $. Then, $M_P$ becomes to be a pseudoconvex domain. Therefore, the function $\hat\rho$ in (\ref{Eq29}) is plurisubharmonic, and thus $P$ is a subharmonic polynomial whose Laplacian does not vanish identically. 
\end{remark}

Now let us recall the following theorem, which ensures the normality of the scaling sequence that will be given in the proof of Proposition \ref{biholomorphic equivalence to the unit ball}.

\begin{proposition}[see Theorem~$3.11$ in \cite{DN09}]\label{Normality of the scaling seq.}
Let $\Omega$ be a domain in $\mathbb C^n$. Suppose that $\partial \Omega$ is pseudoconvex, of D'Angelo finite type and is $\mathcal{C}^\infty$-smooth near a boundary point $(0,\ldots,0)\in\partial \Omega$. Suppose that the Levi form has corank at most $1$ at $(0,\ldots,0)$. Let $D$ be a domain in $\mathbb C^k$ and $\varphi_j:D\to \Omega$ be a sequence of holomorphic mappings such that $\eta_j:=\varphi_j(a)$ converges to $(0,\ldots,0)$ for some point $a\in D$. Let $\{T_j\}$ be a sequence of automorphisms of $\mathbb  C^n$ which associates with the sequence $\{\eta_j\}$ by the  method of the dilation of coordinates (i.e.,\ $T_j=\Delta_{\eta_j'}^{\epsilon_j}\circ \Phi_{\eta_j'}$). Then $\{T_j\circ \varphi_j\}$ is normal and its limits are holomorphic mappings from $D$ to the domain of the form 
\[
M_P=\left\{  (w_1,\ldots,w_n)\in \mathbb C^n:\mathrm{Re}(w_n)+P(w_1,\bar w_1)+\sum_{\alpha=2}^{n-1}|w_\alpha|^2<0\right \},
\]
where $P\in\mathcal{P}_{2m}$. Here $\mathcal{P}_{2m}$ denotes the space of real-valued polynomials on $\mathbb C$ of degree $\leq 2m$ without harmonic terms.
\end{proposition}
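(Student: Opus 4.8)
The plan is to establish the normality of $\{T_j\circ\varphi_j\}$ via a Montel-type argument, for which the essential ingredient is a uniform lower bound on the size of the images $T_j\circ\varphi_j(D)$ together with uniform boundedness on compacta. Concretely, I would first record that by the construction in \eqref{Eq20} and the normalization $T_j(\eta_j)=(0,\ldots,0,-1)$, each scaled domain $\Omega_j:=T_j(U_0^-)$ is defined by a defining function $\rho_{\eta_j'}^{\epsilon_j}$ whose coefficients are bounded by one (by \eqref{Eq5}), so that after passing to a subsequence $\lim\Omega_j=M_P$ in the sense of the Definition above, with $P\in\mathcal P_{2m}$ as in Remark \ref{remark1}. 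The key geometric fact I would invoke is that $M_P$, being a smooth limit of pseudoconvex finite-type domains with the Levi corank condition preserved, is itself a (taut) pseudoconvex domain of finite type; tautness of $M_P$ is what ultimately drives the normality.

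The core of the argument is a localization-and-stability estimate for the Kobayashi metric under scaling. First I would show that the maps $T_j\circ\varphi_j$ avoid escaping to the boundary: since $\varphi_j(a)=\eta_j$ and $T_j(\eta_j)=(0,\ldots,0,-1)\in M_P$, the base point is fixed at an interior point of the limit domain. Using the decreasing property of the Kobayashi distance under the holomorphic maps $\varphi_j$ and the invariance under the biholomorphisms $T_j$, together with the uniform comparison estimates for the Kobayashi metric on finite-type domains near $\xi_0$ furnished by the construction in \cite{Cho94} and \cite{DN09}, I would derive that for each compact $K\subset D$ the images $T_j\circ\varphi_j(K)$ stay within a fixed compact subset of a neighborhood of $(0,\ldots,0,-1)$ in $\mathbb C^n$, uniformly in $j$. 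This gives uniform boundedness on compacta, whence Montel's theorem yields a subsequence converging locally uniformly to a holomorphic map $\varphi_0\colon D\to\mathbb C^n$.

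It then remains to identify the target: I would verify that $\varphi_0(D)\subset M_P$ rather than merely $\overline{M_P}$. Here I would use convergence $\lim\Omega_j=M_P$ together with the Hurwitz-type argument, ruling out that $\varphi_0$ sends an open set into $\partial M_P$ by the maximum principle applied to $\hat\rho\circ\varphi_0$ (which is plurisubharmonic and $\leq 0$, hence identically $0$ on a component is excluded unless $\varphi_0$ is constant into the boundary, contradicting $\varphi_0(a)=(0,\ldots,0,-1)\in M_P$). The main obstacle I anticipate is precisely the stability estimate in the second paragraph: one must control the Kobayashi metric of the \emph{non-dilated} domain $\Omega$ near the degenerate boundary point $\xi_0$ and show these estimates transfer correctly under the anisotropic dilations $\Delta_{\eta_j'}^{\epsilon_j}$, uniformly as $\eta_j'\to\xi_0$. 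This is where the finite-type hypothesis and the corank-at-most-$1$ condition on the Levi form enter essentially, via the sharp bounds of \cite{Cho94}; absent these, the dilations could distort the metric in an unbounded way and normality would fail. I expect this uniform metric-stability step to consume the bulk of the proof, with the Montel and Hurwitz steps being comparatively routine once it is in place.
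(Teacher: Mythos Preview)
The paper does not give a proof of this proposition at all: it is quoted verbatim as Theorem~3.11 of \cite{DN09} and used as a black box. So there is no ``paper's own proof'' to compare against; your proposal is an outline of how one would reprove the cited result.

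That said, your outline is essentially the strategy carried out in \cite{DN09} (following Berteloot's template from \cite{Ber94}): convergence $\Omega_j\to M_P$ from the bounded coefficients in \eqref{Eq20}, uniform Kobayashi-metric estimates near a finite-type boundary point with Levi corank $\le 1$ coming from \cite{Cho94}, transfer of these estimates under the anisotropic dilations to obtain local uniform boundedness of $T_j\circ\varphi_j$, Montel to extract a limit, and a maximum-principle argument on $\hat\rho$ to land the limit in $M_P$. One step you leave implicit but which is genuinely needed is the \emph{localization} (``attraction'') lemma: before the dilations see anything, you must first show that for every compact $K\subset D$ one has $\varphi_j(K)\subset U_0^-$ for $j$ large. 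This uses that $\xi_0$ is a local holomorphic peak point (cf.\ \cite{Yu94}), not merely the Kobayashi contraction; without it the composition $T_j\circ\varphi_j$ need not map $K$ into the scaled domain $T_j(U_0^-)$, and the convergence $\Omega_j\to M_P$ is irrelevant. Your invocation of tautness of $M_P$ is also slightly misplaced: tautness of the limit domain is used in the companion direction (normality of $\varphi_j^{-1}\circ T_j^{-1}$ in \cite[Proposition~2.1]{DN09}), whereas for the direction stated here it is the uniform metric bounds on the source side that do the work.
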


\begin{proposition}\label{biholomorphic equivalence to the unit ball}  
$M_P$ is biholomorphically equivalent to the complex unit ball $\mathbb B^n$.
\end{proposition}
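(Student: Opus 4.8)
The plan is to realise the biholomorphism between $\mathbb B^n$ and $M_P$ by first producing a pair of holomorphic maps whose composition in one order is the identity of $\mathbb B^n$, and then promoting this to a genuine biholomorphism by a Cartan-type uniqueness argument at the distinguished interior point $q_0:=(0,\ldots,0,-1)\in M_P$. Recall from Section~\ref{section 2} that $T_j(\eta_j)=q_0$ for every $j$, where $T_j=\Delta_{\eta_j'}^{\epsilon_j}\circ\Phi_{\eta_j'}$ is the scaling sequence and $T_j(U_0^-)\to M_P$; note $q_0\in M_P$ since $P$ carries no harmonic terms. Both hypotheses of Theorem~\ref{main thm} feed into this scheme the same way, so I would treat them in parallel.

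Consider first the squeezing hypothesis $s_\Omega(\eta_j)\to 1$. Choose embeddings $f_j\colon\Omega\to\mathbb B^n$ with $f_j(\eta_j)=0$ and $\mathbb B(0;s_j)\subset f_j(\Omega)\subset\mathbb B^n$, where $s_j\to 1$. On the one hand, the maps $F_j:=f_j\circ T_j^{-1}$ are uniformly bounded, so by Montel's theorem together with the domain convergence $T_j(U_0^-)\to M_P$ a subsequence converges locally uniformly to a holomorphic $\widehat F\colon M_P\to\overline{\mathbb B^n}$ with $\widehat F(q_0)=0$; since $\|\widehat F\|^2$ is plurisubharmonic and already attains the interior value $0$ at $q_0$, the maximum principle forces $\widehat F(M_P)\subset\mathbb B^n$. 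On the other hand, the inverse branches $g_j:=f_j^{-1}\colon\mathbb B(0;s_j)\to\Omega$ satisfy $g_j(0)=\eta_j\to\xi_0$, so applying Proposition~\ref{Normality of the scaling seq.} to $\varphi_j=g_j$ on each ball $\mathbb B(0;r)$ with $r<1$ and diagonalising yields a holomorphic limit $\widehat g\colon\mathbb B^n\to M_P$ with $\widehat g(0)=q_0$. Passing to the limit in $F_j\circ(T_j\circ g_j)=f_j\circ f_j^{-1}=\mathrm{id}$ (valid on $\mathbb B(0;s_j)$) gives $\widehat F\circ\widehat g=\mathrm{id}_{\mathbb B^n}$.

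For the Fridman hypothesis $h_\Omega(\eta_j)\to 0$, choose embeddings $f_j\colon\mathbb B^n\to\Omega$ with $f_j(0)=\eta_j$ (after composing with an automorphism of $\mathbb B^n$) and $B_\Omega(\eta_j,r_j)\subset f_j(\mathbb B^n)$ with $r_j\to\infty$. Proposition~\ref{Normality of the scaling seq.} applied to $\varphi_j=f_j$ produces a limit $\widehat f\colon\mathbb B^n\to M_P$ with $\widehat f(0)=q_0$. For the reverse direction I would use that the Kobayashi distance is invariant under the biholomorphisms $T_j$, so $T_j(B_\Omega(\eta_j,r_j))$ is exactly the Kobayashi ball of $T_j(\Omega)$ centred at $q_0$ of radius $r_j$; combining $r_j\to\infty$ with the local uniform convergence of the Kobayashi distances $d^K_{T_j(\Omega)}\to d^K_{M_P}$ under $T_j(\Omega)\to M_P$, these balls exhaust $M_P$. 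Hence $G_j:=f_j^{-1}\circ T_j^{-1}$ is eventually defined on each compact subset of $M_P$, is uniformly bounded, and a Montel/diagonal argument gives $\widehat G\colon M_P\to\mathbb B^n$ with $\widehat G(q_0)=0$ and $\widehat G\circ\widehat f=\mathrm{id}_{\mathbb B^n}$.

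In either case I now have holomorphic maps $A\colon\mathbb B^n\to M_P$ and $B\colon M_P\to\mathbb B^n$ with $A(0)=q_0$, $B(q_0)=0$ and $B\circ A=\mathrm{id}_{\mathbb B^n}$. Differentiating at $0$ gives $dB_{q_0}\circ dA_0=\mathrm{id}$, so the $n\times n$ matrices $dA_0$ and $dB_{q_0}$ are mutually inverse; consequently $A\circ B\colon M_P\to M_P$ fixes $q_0$ with $d(A\circ B)_{q_0}=dA_0\circ dB_{q_0}=\mathrm{id}$. Since $M_P$ is a pseudoconvex finite-type model with $P$ subharmonic and $\Delta P\not\equiv 0$ (Remark~\ref{remark1}), it is taut, so the Cartan uniqueness theorem forces $A\circ B=\mathrm{id}_{M_P}$. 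Thus $A$ and $B$ are mutually inverse biholomorphisms and $M_P\cong\mathbb B^n$. The two steps I expect to be most delicate are the exhaustion of $M_P$ by the scaled Kobayashi balls in the Fridman case, which rests on the stability of the Kobayashi distance under the domain convergence $T_j(\Omega)\to M_P$, and the verification that $M_P$ is taut so that Cartan's theorem legitimately applies on this unbounded model.
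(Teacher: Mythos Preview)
Your scheme is sound and close in spirit to the paper's, but the endgame is different. The paper, after producing the two normal families $T_j\circ f_j^{-1}$ and $f_j\circ T_j^{-1}$ (resp.\ $T_j\circ g_j$ and $g_j^{-1}\circ T_j^{-1}$), simply invokes \cite[Proposition~2.1]{DN09}: a black-box saying that if $A_j\colon\Omega_j\to\Omega_j'$ are biholomorphisms between converging sequences of domains, both $\{A_j\}$ and $\{A_j^{-1}\}$ are normal, and $\{A_j\}$ is not compactly divergent, then a subsequential limit is a biholomorphism between the limit domains. This replaces your Cartan-uniqueness step entirely, so the paper never needs to verify tautness of $M_P$ separately (whatever hyperbolicity is required is packaged inside that proposition). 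Your route via $B\circ A=\mathrm{id}_{\mathbb B^n}$, invertibility of the differentials, and Cartan on $M_P$ is a legitimate alternative, and arguably more transparent; it does cost you the tautness of $M_P$, which is known (these models are complete Kobayashi hyperbolic when $P$ is subharmonic with $\Delta P\not\equiv 0$), but you should cite it rather than leave it as a flag.

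One genuine imprecision to fix in your Fridman case: you write ``$d^K_{T_j(\Omega)}\to d^K_{M_P}$ under $T_j(\Omega)\to M_P$'', but only $T_j(U_0^-)\to M_P$ is established, not $T_j(\Omega)\to M_P$. The paper handles this by first localising the Fridman invariant via \cite[Proposition~3.4]{MV12} (which uses that $\xi_0$ is a local peak point), so that the embeddings land in $U_0\cap\Omega$ from the start. You can either do the same, or observe that your exhaustion argument actually needs only the \emph{upper} bound $\limsup_j d^K_{T_j(\Omega)}(q_0,\cdot)\le d^K_{M_P}(q_0,\cdot)$ on compacta of $M_P$; this follows from the chain $d^K_{T_j(\Omega)}\le d^K_{T_j(U_0^-)}$ (monotonicity under inclusion) together with the easy upper-semicontinuity of the Kobayashi distance under the convergence $T_j(U_0^-)\to M_P$, which does not require tautness. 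Either fix closes the gap.
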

\begin{proof} Let $\{\eta_j\}\subset \Omega$ be a sequence as in Theorem \ref{main thm}, that is, $\eta_j \to\xi_{0}=0$ as $j\to \infty$. We now split the proof into two following cases:

\noindent
{\bf Case 1:} {\it $\lim\limits_{j\to \infty} s_{\Omega}(\eta_j)=1$.} Let us set $\delta _j=2(1-s_{\Omega}(\eta_j))$ for all $j$. Then by our assumption, for each $j$, there exists an injective holomorphic map $f_j:\Omega\to \mathbb{B}^n $ such that $f_j(\eta_j)=(0',0)$ and $\mathbb{B}(0;1-\delta _j)\subset f_j(\Omega)$. Then by \cite[Proposition $2.2$]{DN09} and the hypothesis of Theorem \ref{main thm}, after choosing a suitable sequence of injective holomorphic mappings $f_j: \Omega\to \mathbb B^n$ whose existence is assured by the assumption on the squeezing function $s_{\Omega}$, for each compact subset $K\Subset \mathbb B^n$ and each neighborhood $U_0$ of $\xi_0$, there exists an integer $j_0$ such that $f_j^{-1}(K)\subset \Omega\cap U_0$ for all $j\geq j_0$, i.e., $f_j(\Omega \cap U_0)$ converges to $\mathbb B^n$. Then it follows from Proposition~\ref{Normality of the scaling seq.} that the sequence $T_j\circ f_j^{-1} \colon f_j(\Omega \cap U_0) \to  T_j(\Omega \cap U_0) $ is normal and its limits are holomorphic mappings from $\mathbb B^n$ to $M_P$. Moreover, by Montel's theorem the sequence $ f_j\circ T_j^{-1} \colon T_j(\Omega \cap U_0)\to   f_j(\Omega \cap U_0) \subset \mathbb B^n$ is also normal. We further note that the sequence $T_j\circ f_j^{-1}$ is not compactly divergent since $T_j\circ f_j^{-1}(0',0)=(0',-1)$. Then by \cite[Proposition $2.1$]{DN09}, after taking some subsequence of $\{T_j\circ f^{-1}_{j}\}$, we may assume that such a subsequence converges uniformly on every compact subset of $\mathbb B^n$ to a biholomorphism $F$ from $\mathbb B^n$ onto $M_P$, as desired.

\noindent
{\bf Case 2:} {\it $\lim\limits_{j\to \infty} h_{\Omega}(\eta_j)=0$.}  

Since the point $\xi_0$ is a local peak point (cf. \cite{Yu94}), by \cite[Proposition $3.4$]{MV12}, one has $\lim\limits_{j\to \infty} h_{U_0\cap \Omega}(\eta_j)=0$. Moreover, by our assumption, there exist a sequence of positive real numbers $R_j\to +\infty$ and a sequence of biholomorphic embeddings $g_j:\mathbb B^n\to U_0\cap \Omega$ such that $g_j(0)=\eta_j$ and $B_{U_0\cap \Omega}(\eta_j,R_j)\subset g_j(\mathbb B^n)$. Then it follows from Proposition~\ref{Normality of the scaling seq.} that the sequence $ T_j\circ g_j\colon \mathbb B^n \to  T_j(\Omega \cap U_0) $ is normal and its limits are holomorphic mappings from $\mathbb B^n$ to $M_P$. Moreover, by Montel's theorem the sequence $ g_j^{-1}\circ T_j^{-1} \colon T_j(\Omega \cap U_0)\to   g_j^{-1}(\Omega \cap U_0) \subset \mathbb B^n$ is also normal. We also note that the sequence $T_j\circ g_j$ is not compactly divergent since $T_j\circ g_j(0',0)=(0',-1)$. Then by \cite[Proposition $2.1$]{DN09}, after taking some subsequence of $\{T_j\circ g_j\}$, we may assume that such a subsequence converges uniformly on every compact subset of $\mathbb B^n$ to a biholomorphism $G$ from $\mathbb B^n$ onto $M_P$, as desired.

Altogether, the proof is now complete.
\end{proof}
\begin{remark}\label{final remark in section 3}As in \cite{JK18}, the sequence $\{\eta_j\}$ can be chosen so that $\eta_j$ converges to $\xi_0$ along the direction normal to the boundary. Therefore, $P(z_1,\bar{z}_1)$ must be homogeneous subharmonic polynomial of degree $2m$. However, by using the argument as in \cite[Sections $3$ and $4$ ]{Ber94} (see also \cite[Section $4$]{DN09}), in our situation, $P$ is also a  homogeneous subharmonic polynomial of degree $2m$ without harmonic terms. Moreover, one sees from Remark \ref{remark1} in particular that $\Delta P\not \equiv 0$.
\end{remark}

\section{Proof of the main theorem}\label{proof of the main thm}  
We shall complete the proof of Theorem ~\ref{main thm} as our main result in this section. Recall from Remark~\ref{final remark in section 3} that 
\[
M_P=\left\{(z_1,\ldots,z_n)\in \mathbb{C}^{n}\colon \mathrm{Re}(z_n) +P(z_1,\bar{z}_1)+|z_2|^2+\cdots+|z_{n-1}|^2<0\right\},
\]
where $P$ is a non-zero real-valued subharmonic polynomial of degree $2m$. We define a space $\mathcal{H}_{2m}$ by setting
\[
\mathcal{H}_{2m}:=\{H\in\mathcal{P}_{2m}: \mathrm{deg}H=2m,\; H\;\text{is homogeneous and subharmonic}\}, 
\]where the space $\mathcal{P}_{2m}$ is given as in Proposition~\ref{Normality of the scaling seq.}.

With these notations, we prepare one more lemma in order to prove Theorem~\ref{main thm}. 
\begin{lemma}[see Lemma $3.2$ in \cite{Ber94}]\label{equivalence of models} Let $Q\in \mathcal{P}_{2m}$ and $H\in\mathcal{H}_{2m}$. If $M_Q$ and $M_H$ are biholomorphically equivalent, then the homogeneous part of higher degree in $Q$ is equal to $\lambda H(e^{i\nu}z)$ for some $\lambda >0$ and $\nu\in [0,2\pi]$.
\end{lemma}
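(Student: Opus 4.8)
The plan is to peel off the lower-order part of $Q$ using the dilation automorphisms of the homogeneous model $M_H$, and then to extract the rigidity from the resulting equivalence of homogeneous models. Write $Q=\sum_k Q_k$, where $Q_k$ collects the monomials $z_1^a\bar z_1^b$ of $Q$ with $a+b=k$, and let $Q_{2m}$ be the homogeneous part of top degree (the equivalence will eventually force $\deg Q_{2m}=2m$). Because $H$ is homogeneous of degree $2m$ and free of harmonic terms, the anisotropic dilations
\[
\delta_t(w_1,\ldots,w_n)=\bigl(t\,w_1,\ t^m w_2,\ldots,t^m w_{n-1},\ t^{2m}w_n\bigr),\qquad t>0,
\]
satisfy $\hat\rho_H\circ\delta_t=t^{2m}\hat\rho_H$, so $\delta_t\in\mathrm{Aut}(M_H)$; the same formula defines $\mathrm{Aut}(M_{Q_{2m}})$. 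Substituting $w=\delta_t(\zeta)$ into the defining inequality of $M_Q$ and multiplying by $t^{2m}$ replaces the coefficient of $Q_k$ by $t^{2m-k}$, so that $\delta_t(M_Q)$ converges, in the sense of convergence of domains recalled in Section~\ref{section 2}, to $M_{Q_{2m}}$ as $t\to 0^+$.

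First I would reduce to the homogeneous case. Given a biholomorphism $F\colon M_Q\to M_H$, I form
\[
\Psi_t:=\delta_t\circ F\circ\delta_t^{-1}\colon\ \delta_t(M_Q)\longrightarrow M_H,
\]
a biholomorphism onto $M_H$ for each $t>0$. Invoking the normality and non-compact-divergence statements already used in the paper (Proposition~\ref{Normality of the scaling seq.} and \cite[Propositions~2.1 and 2.2]{DN09}), after normalizing the image of a fixed interior base point by automorphisms of $M_H$, I would extract a subsequence $\Psi_{t_j}\to G$ converging uniformly on compact subsets of $M_{Q_{2m}}$ to a biholomorphism $G\colon M_{Q_{2m}}\to M_H$. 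The virtue of this construction is that the elementary identity $\delta_s\circ\Psi_t=\Psi_{st}\circ\delta_s$ survives in the limit: since $st_j\to 0$ as well (the scaling limit being independent of the chosen sequence), one obtains $G\circ\delta_s=\delta_s\circ G$ for every $s>0$, so $G$ intertwines the two dilation groups.

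With the intertwining in hand the argument is purely algebraic. Commuting with $\delta_s$ forces $G$ to be weighted homogeneous for the weights $(1,m,\ldots,m,2m)$, whence (for $m\ge 2$; the case $m=1$ is the ball) its components must read $G_1=c\,z_1$, $G_\alpha=d_\alpha z_1^m+\sum_\beta u_{\alpha\beta}z_\beta$ and $G_n=e\,z_n+e'z_1^{2m}+\sum_\beta f_\beta z_1^m z_\beta+\sum_{\beta,\gamma}g_{\beta\gamma}z_\beta z_\gamma$, with $c,e\neq 0$ and $(u_{\alpha\beta})$ invertible. Substituting into the identity $\hat\rho_H\circ G=u\,\hat\rho_{Q_{2m}}$, in which the positive factor $u$ is weighted homogeneous of degree $0$ and hence a positive constant, I would match monomials: the coefficient of $\mathrm{Re}(z_n)$ gives $e=u>0$; the purely holomorphic term $z_1^{2m}$ occurs only through $e'z_1^{2m}$ and must vanish, since neither $Q_{2m}$ nor $H$ carries harmonic terms, so $e'=0$; and the mixed terms $z_1^m\bar z_\beta$ arise only from the cross term $2\,\mathrm{Re}\bigl(z_1^m\sum_\alpha d_\alpha\overline{u_{\alpha\beta}z_\beta}\bigr)$ in $\sum_\alpha|G_\alpha|^2$, forcing $\sum_\alpha d_\alpha\overline{u_{\alpha\beta}}=0$ for every $\beta$ and thus, by invertibility of $(u_{\alpha\beta})$, $d_\alpha=0$. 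What remains of the identity is $u\,Q_{2m}(z_1,\bar z_1)=H(c\,z_1,\overline{c\,z_1})$; writing $c=|c|e^{i\nu}$ and using the homogeneity of $H$ gives $Q_{2m}(z_1,\bar z_1)=\lambda\,H(e^{i\nu}z_1)$ with $\lambda=|c|^{2m}/u>0$, which is the assertion. In particular $Q_{2m}\not\equiv 0$ forces $\deg Q_{2m}=2m$.

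The step I expect to be the genuine obstacle is the reduction itself, namely proving that the rescaled family $\{\Psi_t\}$ neither degenerates to a constant nor diverges compactly, so that its limit $G$ is truly a biholomorphism of $M_{Q_{2m}}$ onto $M_H$. This is exactly where the geometry enters: were $Q_{2m}\equiv 0$, the limit domain would be $\{\mathrm{Re}(w_n)+\sum_\alpha|w_\alpha|^2<0\}\cong \mathbb C\times\mathbb B^{n-1}$, which is not equivalent to the finite-type model $M_H$, so the family would then necessarily fail to converge to a biholomorphism. Since $\mathrm{Aut}(M_H)$ cannot translate the $w_1$-coordinate, the base-point normalization must be effected using only the $w_n$- and $w_\alpha$-translations together with the dilations, and one has to verify that the base point can be held in a fixed compact subset of $M_H$ along the rescaling. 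This is the delicate point addressed by the scaling machinery of \cite{DN09} and the commutation estimates of \cite[Sections~3 and 4]{Ber94}; once it is settled, the intertwining relation and the monomial bookkeeping above complete the proof.
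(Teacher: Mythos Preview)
The paper does not supply its own proof of this lemma: it is simply quoted from \cite[Lemma~3.2]{Ber94} and used as a black box. So there is nothing in the present paper to compare your argument against; the relevant benchmark is Berteloot's original proof (for $n=2$) and its higher-dimensional adaptation in \cite[Section~4]{DN09}.

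Your outline follows the correct scaling philosophy, and the algebraic endgame (weighted homogeneity of $G$ forcing $G_1=cz_1$, elimination of the cross terms, and the final identity $u\,Q_{2m}=H(c\,z_1)$) is sound. But the gap you flag is real and is precisely the content of the lemma, not a routine technicality to be waved at. You invoke Proposition~\ref{Normality of the scaling seq.} and \cite[Propositions~2.1--2.2]{DN09}, but those are stated for scalings based at a \emph{boundary} orbit of a bounded finite-type domain, not for the family $\Psi_t=\delta_t\circ F\circ\delta_t^{-1}$ on the unbounded model $M_H$; your base point $\delta_t^{-1}(p)=(0,\ldots,0,-t^{-2m})$ escapes to infinity in $M_Q$, and nothing in the cited propositions controls $F$ there.

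There is a second, subtler issue. You obtain convergence only after post-composing with normalizing automorphisms $A_t\in\mathrm{Aut}(M_H)$, i.e.\ you actually pass to $\tilde\Psi_t=A_t\circ\Psi_t$. The clean identity $\delta_s\circ\Psi_t=\Psi_{st}\circ\delta_s$ does \emph{not} survive this: one gets instead $\delta_s\circ\tilde\Psi_t=(\delta_s A_t\delta_s^{-1}A_{st}^{-1})\circ\tilde\Psi_{st}\circ\delta_s$, and there is no reason for the bracketed automorphism to be the identity. Hence the intertwining $G\circ\delta_s=\delta_s\circ G$ is not a formal consequence of the construction, and your appeal to ``the scaling limit being independent of the chosen sequence'' is exactly what fails once normalization is introduced. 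Berteloot's actual route avoids this by working with a one-parameter subgroup of $\mathrm{Aut}(M_Q)$ transported from the dilations of $M_H$, analyzing its boundary orbit accumulation, and running the scaling there; the commutation is then built into the construction rather than recovered in the limit. In short: the monomial bookkeeping is fine, but the reduction step needs the argument of \cite[\S3]{Ber94} (or \cite[\S4]{DN09}) carried out, not merely cited.
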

We note first that the complex unit ball $\mathbb{B}^n$ is biholomorphic to the Siegel half-space $\{(z_1,\ldots,z_n)\in \mathbb{C}^n \colon\mathrm{Re}(z_n) +|z_1|^2+|z_2|^2+\cdots+|z_{n-1}|^2<0\}$. In addition, Proposition \ref{biholomorphic equivalence to the unit ball} and Lemma \ref{equivalence of models} imply that $P(z_1,\bar {z}_1)=c|z_1|^{2}$ for some $c>0$, that is, $m=1$. Combining these two facts, we conclude that $\Omega$ is strongly pseudoconvex at $\xi_0$ ($\xi_0$ is of the D'Angelo type $2$), which ends the proof of Theorem \ref{main thm}.
 
\begin{Acknowledgement}Part of this work was done while the authors were visiting the Vietnam Institute for Advanced Study in Mathematics (VIASM). They would like to thank the VIASM for financial support and hospitality. This research was supported by the Vietnam National Foundation for Science and Technology Development (NAFOSTED) under grant number 101.02-2017.311 and the National Research Foundation of the Republic of Korea under grant number NRF-2018R1D1A1B07044363.
\end{Acknowledgement}

\end{document}